\documentclass[11pt]{article}
\usepackage[a4paper,margin=2.7cm]{geometry}
\usepackage[T1]{fontenc}
\usepackage[utf8]{inputenc}
\usepackage{lmodern}
\usepackage{hyperref}

\usepackage{amsmath,amssymb,amsfonts}
\usepackage{amsthm}
\usepackage{mathrsfs}

\theoremstyle{plain}
\newtheorem{theorem}{Theorem}[section]
\newtheorem{lemma}[theorem]{Lemma}
\newtheorem{proposition}[theorem]{Proposition}
\newtheorem{corollary}[theorem]{Corollary}
\newtheorem{problem}[theorem]{Open Problem}

\theoremstyle{definition}

\newtheorem{remark}[theorem]{Remark}

\theoremstyle{definition}
\newtheorem{definition}[theorem]{Definition}

\begin{document}

\title{Order Boundedness of Order Continuous Orthogonally Additive Operators} 

\author{Ayşe Uyar\thanks{Email: ayseu@gazi.edu.tr}\\Retired Professor of Mathematics, Istanbul, Türkiye}
\date{}

\maketitle

\begin{abstract}
We prove that, under suitable assumptions, the space of order
continuous orthogonally additive operators between Riesz spaces is a
Riesz space. As a consequence, we give a negative answer to an open
problem posed by Mykhaylyuk and Popov concerning the order continuity
of the modulus of an order continuous orthogonally additive operator.
We also show that, under an appropriate condition on the domain,
every order continuous orthogonally additive operator is order bounded,
obtaining an orthogonally additive analogue of the corresponding
result for linear operators.
Moreover, we establish the order boundedness result
in ZF for atomic Riesz spaces and in ZF with
dependent choice (DC) for Dedekind complete Riesz spaces.
Finally, we characterize when every
orthogonally additive operator is an abstract Uryson operator.
\end{abstract}

\noindent\textbf{Keywords:}
Orthogonally additive operator; abstract Uryson operator; Popov operator;
order continuity; order bounded operator;
Riesz space.

\medskip
\noindent\textbf{2020 Mathematics Subject Classification:}
46A40, 47H30.

\section{Introduction}

Orthogonally additive operators form a natural nonlinear extension of
linear operators on Riesz spaces. A mapping $T:X\to Y$ between Riesz
spaces is called orthogonally additive if $T(x+y)=T(x)+T(y)$ whenever
$x\perp y$. Among the most important classes of such operators are
abstract Uryson operators, that is, order bounded orthogonally additive
operators, and Popov operators, which are orthogonally additive
operators that are laterally-to-order bounded. Orthogonally additive
operators and various aspects of their theory have been studied
extensively; see, among others,
\cite{FotiyKadetsPopov23,
FotiyKrasikovaPlievPopov22,
MazonSegura90,
MykhaylyukPlievPopov21,
MykhaylyukPlievPopovSobchuk,
MykhaylyukPopov22,
PlievRamdane18}.

Order continuity of orthogonally additive operators was studied by
Fotiy, Krasikova, Pliev and Popov
\cite{FotiyKrasikovaPlievPopov22} and subsequently by Mykhaylyuk and
Popov \cite{MykhaylyukPopov22}. In the latter paper, the following
problem was posed.

\begin{problem}[{\cite[Problem~4]{MykhaylyukPopov22}}]
Let $X$ be a Riesz space with the principal projection property
(or even a Dedekind complete Riesz space), and let $Y$ be a
Dedekind complete Riesz space. Does there exist an order continuous
orthogonally additive operator $T:X\to Y$ such that $|T|$ is not
order continuous?
\end{problem}

To the best of our knowledge, this problem has remained open
(M.~Popov, private communication). In this paper, we obtain three
main results.

First, we prove that, under the assumptions above, the space
$\mathcal O_o(X,Y)$ of all order continuous orthogonally additive
operators is a Riesz space. In particular, the modulus of every
order continuous orthogonally additive operator is order continuous,
which gives a negative answer to the above problem.

Second, we prove that every order continuous orthogonally additive
operator from a Riesz space $X$ with a cofinal family of projection
bands into an arbitrary Riesz space $Y$ is order bounded, and hence
is an abstract Uryson operator. This result may be viewed as an
orthogonally additive counterpart of a classical result of Abramovich
and Sirotkin \cite[Theorem~2.1]{AbramovichSirotkin05}, which states
that every order continuous linear operator between Riesz spaces is
automatically order bounded. Although orthogonally additive operators
form a natural nonlinear extension of linear operators, the
corresponding implication is no longer automatic, since the linearity
argument is not available. Our result shows that the implication
remains valid for orthogonally additive operators when the domain has
a cofinal family of projection bands.

The assumption that $X$ has a cofinal family of projection bands is
weaker than the principal projection property. In particular, our
order boundedness result applies whenever $X$ has the principal
projection property, and hence whenever $X$ is Dedekind complete.

We also examine the role of the axiom of choice
in the order boundedness result, showing that
the conclusion can be established in
Zermelo--Fraenkel set theory (ZF) for atomic
Riesz spaces and in ZF together with the
axiom of dependent choice (DC) for
Dedekind complete Riesz spaces.

Finally, if $X$ has a cofinal family of projection bands and $Y$ is
Archimedean, we characterize when every orthogonally additive operator
is an abstract Uryson operator. More precisely, we prove that
\[
\mathcal O(X,Y)=\mathcal U(X,Y)
\]
if and only if $X=\{0\}$ or $Y=\{0\}$.

\section{Preliminaries}

Throughout this paper, all Riesz spaces are assumed to be Archimedean.
We use the standard terminology and notation of Riesz space theory as
in \cite{AliprantisBurkinshaw06,LuxemburgZaanen71,Zaanen83}.

\begin{definition}
A Riesz space $X$ is said to have a \emph{cofinal family of projection
bands} if, for every nonzero band $B$ in $X$, there exists a nonzero
projection band $D$ such that
\[
D\subseteq B.
\]
\end{definition}

This property is weaker than the principal projection property; see
Abramovich and Kitover \cite[Definition~4.2]{AbramovichKitover00}.
In particular,
\[
\text{Dedekind complete}
\quad\Longrightarrow\quad
\text{principal projection property}
\quad\Longrightarrow\quad
\text{cofinal family of projection bands}.
\]

Let $X$ be a Riesz space. Two elements $x,y\in X$ are said to be
\emph{disjoint}, written $x\perp y$, if $|x|\wedge |y|=0$. An element
$y\in X$ is called a \emph{fragment} (or \emph{component}) of
$x\in X$, written $y\sqsubseteq x$, if $y\perp(x-y)$. The set of all
fragments of $x$ is denoted by
\[
\mathfrak F_x:=\{y\in X:y\sqsubseteq x\}.
\]
For $x,y,z\in X$, the notation $x=y\sqcup z$ means that
$x=y+z$ and $y\perp z$.

\begin{definition}
Let $X$ be a Riesz space.

\begin{itemize}
\item[(i)] A nonzero element $u\in X$ is called an
\emph{atom} if $0\le x,y\le |u|$ and
$x\perp y$ imply that $x=0$ or $y=0$.

\item[(ii)] The Riesz space $X$ is called
\emph{atomic} (or \emph{discrete}) if every
nonzero positive element of $X$ dominates
an atom.

\item[(iii)] A nonzero element $u\in X$ is called a
\emph{weak atom} if
$\mathfrak F_u=\{0,u\}$.
\end{itemize}
\end{definition}

Every atom is a weak atom. Moreover, the band
generated by an atom $u$ is a one-dimensional
projection band; see
\cite[Theorem~26.4(iii)]{LuxemburgZaanen71}.
Consequently, every atomic Riesz space has a
cofinal family of projection bands.

If $X$ has the principal projection property,
then every weak atom is an atom; see
\cite[Proposition~1.3]{MykhaylyukPlievPopovSobchuk}.
In Section~3, we show that this conclusion
remains valid when $X$ has a cofinal family
of projection bands.

Throughout the paper, nets and subnets are understood in the sense of
Kelley \cite{Kelley55}.

\begin{definition}
Let $X$ be a Riesz space and $V$ a real vector space. A mapping
$T:X\to V$ is called an \emph{orthogonally additive operator}
(OAO, for short) if $T(x+y)=T(x)+T(y)$ whenever $x\perp y$.
\end{definition}

Every orthogonally additive operator satisfies $T(0)=0$. In particular,
a mapping $T:\mathbb R\to\mathbb R$ is orthogonally additive if and
only if $T(0)=0$. If $X$ and $Y$ are Riesz spaces, we denote by
$O(X,Y)$ the vector space of all orthogonally additive operators from
$X$ to $Y$.

\begin{definition}
Let $X$ and $Y$ be Riesz spaces and let $T\in O(X,Y)$. The operator
$T$ is called
\begin{itemize}
\item \emph{positive} if $T(x)\ge0$ for every $x\in X$;
\item \emph{order bounded} if it maps order bounded subsets of $X$
      to order bounded subsets of $Y$;
\item \emph{laterally-to-order bounded} if $T(\mathfrak F_x)$ is
      order bounded in $Y$ for every $x\in X$.
\end{itemize}
\end{definition}

An order bounded orthogonally additive operator is called an
\emph{abstract Uryson operator}. We denote by $U(X,Y)$ the vector
space of all abstract Uryson operators from $X$ to $Y$. Following
Pliev and Ramdane \cite{PlievRamdane18}, a laterally-to-order bounded
orthogonally additive operator is called a \emph{Popov operator}. We
denote by $P(X,Y)$ the vector space of all Popov operators from $X$ to
$Y$. Clearly,
\[
U(X,Y)\subseteq P(X,Y)\subseteq O(X,Y).
\]

The vector spaces $O(X,Y)$, $P(X,Y)$, and $U(X,Y)$ are equipped with
the natural order: $S\le T$ if and only if $T-S$ is positive. With
this order, they are ordered vector spaces.

The lattice structure of abstract Uryson operators was established by
Maz\'on and Segura de Le\'on.

\begin{theorem}
\cite[Theorem~3.2]{MazonSegura90}
Let $X$ and $Y$ be Riesz spaces, with $Y$ Dedekind complete. Then
$U(X,Y)$ is a Dedekind complete Riesz space. Moreover, for
$S,T\in U(X,Y)$ and $x\in X$,
\[
\begin{aligned}
(T\vee S)(x)
 &=\sup\{T(y)+S(z):x=y\sqcup z\},\\
(T\wedge S)(x)
 &=\inf\{T(y)+S(z):x=y\sqcup z\},\\
T^+(x)
 &=\sup\{T(y):y\sqsubseteq x\},\\
T^-(x)
 &=-\inf\{T(y):y\sqsubseteq x\}.
\end{aligned}
\]
Furthermore, $|T(x)|\le |T|(x)$ for every $x\in X$.
\end{theorem}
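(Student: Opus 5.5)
The plan is to follow the classical Maz\'on--Segura de Le\'on strategy. For fixed $S,T\in U(X,Y)$ we define candidate operators by the displayed formulas. We check that they are well defined, orthogonally additive and order bounded. Then we verify that they are the lattice operations in the natural order. Dedekind completeness is handled by a separate supremum formula for bounded sets.

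\textbf{Step 1 (finiteness and boundedness).} Fix $x$ in an order interval $[-u,u]$. If $x=y\sqcup z$, then $|y|,|z|\le|x|\le u$. Since $T$ and $S$ are order bounded, the set $\{T(y)+S(z)\}$ lies in $T([-u,u])+S([-u,u])$, which is order bounded in $Y$. The supremum therefore exists by Dedekind completeness of $Y$, and as $x$ ranges over $[-u,u]$ the resulting values stay in one fixed order interval. So $R(x):=\sup\{T(y)+S(z):x=y\sqcup z\}$ defines an order bounded map. Clearly $R\ge T$ (take $z=0$) and $R\ge S$ (take $y=0$).

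\textbf{Step 2 (orthogonal additivity, the main obstacle).} Let $x=x_1\sqcup x_2$. The key lattice fact is that fragments split: if $y\sqsubseteq x$, then $y=(y\wedge$-part in the band of $x_1)\sqcup(\text{part in the band of }x_2)$. Concretely, we use the Riesz decomposition property for fragments, namely that $\mathfrak F_x=\mathfrak F_{x_1}\sqcup\mathfrak F_{x_2}$ bijectively. We prove this without projections: for $y\sqsubseteq x$ set $y_i$ to be the fragment of $x_i$ given by $y_i=y^+\wedge x_i^+-y^-\wedge x_i^-$, and check $y=y_1+y_2$ with $y_i\sqsubseteq x_i$ by disjointness computations. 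With this in hand, every decomposition $x=y\sqcup z$ corresponds to a pair of decompositions $x_i=y_i\sqcup z_i$, and conversely. Orthogonal additivity of $T$ and $S$ then gives
\[
T(y)+S(z)=\bigl(T(y_1)+S(z_1)\bigr)+\bigl(T(y_2)+S(z_2)\bigr).
\]
Taking suprema over independent choices yields $R(x)=R(x_1)+R(x_2)$, since the supremum of a sum of independent families is the sum of the suprema. Verifying the fragment splitting identity carefully is the technical core.

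\textbf{Step 3 (least upper bound, the other operations, and completeness).} Let $W\in O(X,Y)$ with $W\ge T,S$. For $x=y\sqcup z$ we have $W(x)=W(y)+W(z)\ge T(y)+S(z)$, so $W\ge R$. Hence $R=T\vee S$ in $U(X,Y)$, and $U(X,Y)$ is a Riesz space. The infimum formula follows from $T\wedge S=-((-T)\vee(-S))$. Taking $S=0$ gives $T^+(x)=\sup\{T(y):y\sqsubseteq x\}$, and $T^-=(-T)^+$ gives the stated formula. Then $|T(x)|=|T^+(x)-T^-(x)|\le T^+(x)+T^-(x)=|T|(x)$, using $T^\pm\ge0$.

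For Dedekind completeness, let $\{T_\alpha\}$ be an upward directed family with $0\le T_\alpha\le T_0$. Define $R(x)=\sup_\alpha T_\alpha(x)$, which exists because $0\le T_\alpha(x)\le T_0(x)$. Orthogonal additivity follows from directedness, via $\sup_\alpha(a_\alpha+b_\alpha)=\sup a_\alpha+\sup b_\alpha$ for increasing nets. $R$ is order bounded because $|T_\alpha(x)|\le T_0(x)$ and $T_0$ is order bounded. Finally, $R$ is clearly the least upper bound.
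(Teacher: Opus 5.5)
Your proposal is correct. The paper itself gives no proof of this statement; it quotes it from Maz\'on and Segura de Le\'on. Your argument is essentially the standard proof from that source:
\begin{itemize}
\item $T\vee S$ is defined pointwise as a supremum over disjoint decompositions.
\item Orthogonal additivity follows from the splitting $\mathfrak F_{x_1+x_2}=\mathfrak F_{x_1}+\mathfrak F_{x_2}$ for $x_1\perp x_2$. You establish this correctly via $y_i=y^+\wedge x_i^+-y^-\wedge x_i^-$ and without band projections, which is necessary since $X$ is arbitrary.
\item Dedekind completeness follows from pointwise suprema of upward directed, order bounded positive families. This reduction is legitimate once $U(X,Y)$ is known to be a Riesz space.
\end{itemize}
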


Pliev and Ramdane \cite[Theorem~3.6]{PlievRamdane18} proved that the
same result holds for the larger space $P(X,Y)$ of Popov operators:
whenever $Y$ is Dedekind complete, $P(X,Y)$ is a Dedekind complete
Riesz space, and its lattice operations are given by the same formulas
as above.

We next recall the convergence notions used in the paper.

\begin{definition}
Let $(x_\alpha)_{\alpha\in\Gamma}$ be a net in a Riesz space $X$ and
let $x\in X$.
\begin{itemize}
\item[(i)] $(x_\alpha)$ is \emph{strongly order convergent} (or
\emph{so-convergent}) to $x$, written
$x_\alpha\xrightarrow{so}x$, if there exists a net
$(z_\alpha)_{\alpha\in\Gamma}$ in $X$ such that $z_\alpha\downarrow0$
and, for some $\alpha_0\in\Gamma$,
$|x_\alpha-x|\le z_\alpha$ for all $\alpha\ge\alpha_0$.

\item[(ii)] $(x_\alpha)$ is \emph{order convergent} (or
\emph{o-convergent}) to $x$, written $x_\alpha\xrightarrow{o}x$, if
there exists a net $(z_\beta)_{\beta\in\Lambda}$ in $X$ such that
$z_\beta\downarrow0$ and, for every $\beta\in\Lambda$, there exists
$\alpha_0\in\Gamma$ such that $|x_\alpha-x|\le z_\beta$ for all
$\alpha\ge\alpha_0$.

\item[(iii)] $(x_\alpha)$ is said to \emph{laterally converge}
to $x$, written $x_\alpha\xrightarrow{lat}x$, if
\[
x_\alpha\sqsubseteq x_\beta\sqsubseteq x
\quad\text{whenever }\alpha\leq\beta,
\qquad\text{and}\qquad
x_\alpha\xrightarrow{o}x.
\] 
\end{itemize}
\end{definition}

These convergence notions satisfy
\[
x_\alpha\xrightarrow{so}x
\quad\Longrightarrow\quad
x_\alpha\xrightarrow{o}x
\]
If $X$ is Dedekind complete, then strong order convergence and order
convergence coincide; see Abramovich and Sirotkin
\cite{AbramovichSirotkin05}.

\begin{definition}
Let $X$ and $Y$ be Riesz spaces and let $T\in O(X,Y)$.
\begin{itemize}
    \item[(i)] $T$ is called \emph{order continuous} if
    \[
    x_\alpha\xrightarrow{o}x
    \quad\Longrightarrow\quad
    T(x_\alpha)\xrightarrow{o}T(x).
    \]

    \item[(ii)] $T$ is called \emph{laterally continuous} if
    \[
    x_\alpha\xrightarrow{\mathrm{lat}}x
    \quad\Longrightarrow\quad
    T(x_\alpha)\xrightarrow{o}T(x).
    \]
\end{itemize}
\end{definition}

\section{Main Results}

We denote by $\mathcal O_o(X,Y)$ the space of all order continuous
orthogonally additive operators from $X$ to $Y$. We first observe that
\[
\mathcal O_o(X,Y)\subseteq P(X,Y).
\]

\begin{proposition}\label{prop:oc-popov}
Let $X$ and $Y$ be Riesz spaces. If $T:X\to Y$ is an order continuous
orthogonally additive operator, then
\[
T\in P(X,Y).
\]
\end{proposition}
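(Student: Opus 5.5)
The plan is to show directly that $T(\mathfrak F_x)$ is order bounded for each fixed $x\in X$. I would do this by building, from the fragments of $x$, a single net that order converges in $X$. Order continuity of $T$ then makes the image net order convergent in $Y$. The key observation is that an order convergent net is eventually order bounded: if $T(x_\alpha)\xrightarrow{o}T(x)$ via $w_\gamma\downarrow0$, then fixing one $\gamma_0$ gives an $\alpha_0$ with $|T(x_\alpha)-T(x)|\le w_{\gamma_0}$ for all $\alpha\ge\alpha_0$. So it suffices to arrange that every tail of the image net already "sees" every value $T(y)$, $y\sqsubseteq x$, up to quantities we control.

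Orthogonal additivity gives the identity $T(x)=T(y)+T(x-y)$ for $y\sqsubseteq x$. Hence $T(y)$ is bounded as soon as $T(x-y)$ is, and $x-y$ ranges over $\mathfrak F_x$ again. The values $T(x)$ and $T(0)=0$ are fixed, so only the "middle" fragments matter.

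The natural first attempt is to index by $\Gamma=\mathfrak F_x\times\mathbb N$ under a preorder that depends only on the $\mathbb N$-coordinate, so that every tail contains all of $\mathfrak F_x$. One would then look for a net $x_{(y,n)}$, built from $y$ and $x$ by disjoint sums, that converges to $x$ (or to $0$) with a dominating sequence of the form $z_n=|x|/n$. Once such a net exists, the eventual boundedness above gives a uniform bound $|T(x_{(y,n)})|\le |T(x)|+w_{\gamma_0}$ for all $y$. Decomposing $x_{(y,n)}$ into disjoint pieces, one of which is $y$, and using orthogonal additivity then transfers this to a bound on $T(y)$ itself.

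The step I expect to be the main obstacle is exactly the construction of this net, and I am not confident it can be done as described. In the linear case one rescales, $y/n\to0$, but here $T$ is not homogeneous. Moreover, fragments of $x$ are not uniformly close to $x$ or to $0$, so a net whose tails contain all fragments as values cannot order converge. I checked that a preorder trick, taking the values $y$ at level $n=1$ and $x$ at levels $n\ge2$, does not rescue this. The output convergence may choose its $\alpha_0$ at a level $n\ge2$, which discards the fragment values entirely.

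If that direct construction fails, I would try a different route. I would first show that an order continuous $T$ is laterally continuous. I would then exploit the Boolean-algebra structure of $\mathfrak F_x$, which is upward directed by $\sqsubseteq$ with top element $x$. The aim would be a net of fragments that converges laterally and passes through the fragments whose $T$-values we need to control, so that lateral continuity supplies the bound.
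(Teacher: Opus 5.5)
Your proposal does not yet contain a proof. You are right to abandon the first route: if every tail of an order convergent net contained all fragments of $x$, then every fragment would lie within $z_\beta$ of the limit for every $\beta$, and this is impossible once $x\neq0$. In the second route, the step ``order continuous $\Rightarrow$ laterally continuous'' is correct and is exactly the paper's first step, since a laterally convergent net is in particular order convergent. The remaining implication is that a laterally continuous orthogonally additive operator is laterally-to-order bounded. That implication is the whole substance of the proposition, and you state only an aim, not an argument. The paper does not prove it either; it imports it from Pliev and Ramdane \cite[Lemma~3.12]{PlievRamdane18}.

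The mechanism you sketch for that step also cannot work as stated. Take $x\ge0$, so that $\sqsubseteq$ and $\le$ agree on $\mathfrak F_x$. If $x_\gamma\xrightarrow{\mathrm{lat}}x$, the net is increasing with $\sup_\gamma x_\gamma=x$. Lateral continuity then yields only $|T(x_\gamma)-T(x)|\le w$ on some tail $\gamma\succeq\gamma_0$, and you do not control that tail. A fragment $y$ occurring in every tail satisfies $x_{\gamma_0}\le y$ for all $\gamma_0$, hence $y\ge x$, i.e.\ $y=x$. So no laterally convergent net can ``pass through'' a prescribed fragment $y\neq x$ in the sense you need. For instance, the identity net on $(\mathfrak F_x,\sqsubseteq)$ is eventually constant at the top element $x$. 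This is the same obstruction you met in your first route, now with monotonicity imposed. What lateral continuity gives directly is a bound on $T(x-x_\gamma)=T(x)-T(x_\gamma)$ for $\gamma$ in a tail. A genuinely additional argument is needed to pass from such tail bounds to all of $\mathfrak F_x$. To close the gap you must either invoke \cite[Lemma~3.12]{PlievRamdane18}, as the paper does, or supply that argument yourself.
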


\begin{proof}
Let $(x_\alpha)$ be a net in $X$ which laterally converges to some
$x\in X$. By the definition of lateral convergence,
$x_\alpha\xrightarrow{o}x$. Since $T$ is order continuous,
\[
T(x_\alpha)\xrightarrow{o}T(x).
\]
Thus $T$ is laterally continuous. It follows from
\cite[Lemma~3.12]{PlievRamdane18} that $T$ is laterally-to-order bounded,
and hence $T\in P(X,Y)$.
\end{proof}

Combining the preceding proposition with
\cite[Theorem~6.1]{MykhaylyukPopov22}, we obtain the following consequence.

\begin{corollary}\label{cor:ppp-modulus}
Let $X$ be a Riesz space with the principal projection property and
let $Y$ be a Dedekind complete Riesz space. If $T\in\mathcal O_o(X,Y)$,
then $|T|$ is order continuous. Consequently,
$\mathcal O_o(X,Y)$ is a Riesz space.
\end{corollary}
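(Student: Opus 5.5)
The plan is to combine Proposition~\ref{prop:oc-popov} with \cite[Theorem~6.1]{MykhaylyukPopov22}, which (as I read it) says: if $X$ has the principal projection property, $Y$ is Dedekind complete, and $T\in P(X,Y)$ is order continuous, then $|T|$ (equivalently $T^+$ and $T^-$) is order continuous.

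\textbf{Step 1: $T$ is a Popov operator.} Fix $T\in\mathcal O_o(X,Y)$. Proposition~\ref{prop:oc-popov} gives $T\in P(X,Y)$. Since $Y$ is Dedekind complete, the Pliev--Ramdane extension of the Maz\'on--Segura de Le\'on theorem makes $P(X,Y)$ a Dedekind complete Riesz space. Its lattice operations are given by the fragment formulas, for instance
\[
T^+(x)=\sup\{T(y):y\sqsubseteq x\}.
\]
Consequently $|T|=T^++T^-$ exists in $P(X,Y)$.

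\textbf{Step 2: $|T|$ is order continuous.} Since $T$ is order continuous, \cite[Theorem~6.1]{MykhaylyukPopov22} applies and yields that $|T|$ is order continuous. The main thing to check is that the hypotheses of that theorem match exactly: the principal projection property on $X$, Dedekind completeness of $Y$, and $T$ being a Popov operator (supplied by Step 1). If the theorem is stated only for $T^+$, then $T^-=(-T)^+$ is order continuous as well. Hence $|T|=T^++T^-$ is order continuous, because a sum of order continuous maps is order continuous: order convergence is compatible with addition.

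\textbf{Step 3: $\mathcal O_o(X,Y)$ is a Riesz space.} The space $\mathcal O_o(X,Y)$ is a linear subspace of $P(X,Y)$, since sums and scalar multiples of order continuous operators are order continuous. By Step 2 it is closed under taking moduli, computed in $P(X,Y)$. A subspace closed under $|\cdot|$ is a Riesz subspace, because
\[
S\vee R=\tfrac12\bigl(S+R+|S-R|\bigr).
\]
Therefore $\mathcal O_o(X,Y)$ is a Riesz subspace of $P(X,Y)$. The supremum computed in $P(X,Y)$ is also the supremum in $\mathcal O_o(X,Y)$ with its natural order, since it lies in the smaller space and dominates $S$ and $R$ there.

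The only potential obstacle is the precise form of \cite[Theorem~6.1]{MykhaylyukPopov22}. If it assumes lateral continuity or Popov-ness rather than order continuity, Step 1 and the proof of Proposition~\ref{prop:oc-popov}, which shows that order continuity implies lateral continuity, cover that case.
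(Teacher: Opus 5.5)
Your proposal is correct and follows the paper's proof essentially verbatim: Proposition~\ref{prop:oc-popov} places $T$ in $P(X,Y)$, Theorem~6.1 of Mykhaylyuk and Popov then gives order continuity of $|T|$, and closure of $\mathcal O_o(X,Y)$ under moduli yields the Riesz space structure. Your extra check that $\mathcal O_o(X,Y)$ is a Riesz subspace of $P(X,Y)$, via $S\vee R=\tfrac12(S+R+|S-R|)$, makes explicit what the paper leaves implicit.
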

\begin{proof}
By Proposition~\ref{prop:oc-popov}, $T\in P(X,Y)$. Hence
\cite[Theorem~6.1]{MykhaylyukPopov22} implies that $|T|$ is order
continuous. Since $\mathcal O_o(X,Y)$ is an ordered vector space and
is closed under taking moduli, it follows that $\mathcal O_o(X,Y)$
is a Riesz space.
\end{proof}

\begin{remark}
Mykhaylyuk and Popov posed the following problem in
\cite[Problem~4]{MykhaylyukPopov22}: does there exist a Riesz space
$X$ with the principal projection property (or even a Dedekind
complete Riesz space), a Dedekind complete Riesz space $Y$, and an
order continuous orthogonally additive operator $T\colon X\to Y$
such that $|T|$ is not order continuous?

The preceding corollary gives a negative answer to this problem.
Indeed, under precisely these assumptions, every order continuous
orthogonally additive operator $T\colon X\to Y$ has an order
continuous modulus.
\end{remark}

The following proposition shows that the principal projection property
in \cite[Proposition~1.3]{MykhaylyukPlievPopovSobchuk} can be weakened
to the existence of a cofinal family of projection bands.

\begin{proposition}\label{lem:weak-atom-cofinal}
Let $X$ be a Riesz space and let $u$ be a weak atom of $X$.
Denote by $B_u$ the principal band generated by $u$.
Then the following statements hold.
\begin{itemize}
\item[(i)] If $A$ is a nonzero projection band of $X$ such that
$A\subseteq B_u$, then $A=B_u$.

\item[(ii)] If $X$ has a cofinal family of projection bands,
then $u$ is an atom.
\end{itemize}
\end{proposition}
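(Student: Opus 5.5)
For part (i), I will use the band projection $P_A$ onto $A$, which exists because $A$ is a projection band. Since $u\in B_u$, decompose $u=P_Au+(u-P_Au)$ with $P_Au\in A$ and $u-P_Au\in A^d$. These two pieces are disjoint, so $P_Au\sqsubseteq u$. As $u$ is a weak atom, $\mathfrak F_u=\{0,u\}$, and so either $P_Au=0$ or $P_Au=u$.

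In the first case $u\in A^d$, and since $A^d$ is a band, $B_u\subseteq A^d$. Then $A\subseteq B_u\cap A^d\subseteq A\cap A^d=\{0\}$, which contradicts $A\neq\{0\}$. Hence $P_Au=u$, so $u\in A$, and since $A$ is a band we get $B_u\subseteq A$. Combined with $A\subseteq B_u$, this gives $A=B_u$.

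For part (ii), I first show that $B_u$ itself is a projection band. Since $u\neq0$, $B_u$ is a nonzero band, so the cofinal family gives a nonzero projection band $D\subseteq B_u$. By (i), $D=B_u$, so $B_u$ is a projection band.

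Now let $0\le x,y\le|u|$ with $x\perp y$, and suppose $x\neq0$. The plan is to use the projection band $B_x$, but that requires $B_x$ itself to be a projection band. The cofinal family only supplies a nonzero projection band $D\subseteq B_x$. Since $D\subseteq B_x\subseteq B_u$, part (i) gives $D=B_u$, and therefore $B_x=B_u$.

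Then $y\perp x$ implies $y\in B_x^d=B_u^d$. But also $y\le |u|$, so $y\in B_u$. Hence $y\in B_u\cap B_u^d=\{0\}$, i.e. $y=0$. This proves that $u$ is an atom.

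The main obstacle is that $x$ need not be a fragment of $u$, so the weak-atom property cannot be applied to $x$ directly. The argument above sidesteps this by applying (i) to a projection band sitting inside $B_x\subseteq B_u$. The point to verify carefully is the elementary fact that the bands $B_x$, $B_u$, $A^d$ behave as used, in particular that $x\le|u|$ gives $B_x\subseteq B_u$; this is standard.
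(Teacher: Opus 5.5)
Your proof is correct and follows essentially the same route as the paper: in (i) you show that $P_Au$ is a nonzero fragment of $u$, hence equal to $u$. In (ii) you apply (i) to a projection band inside $B_x\subseteq B_u$ to get $B_x=B_u$ and conclude $y\in B_u\cap B_u^d=\{0\}$; your preliminary step showing that $B_u$ is itself a projection band is not needed for this, though it does no harm.
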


\begin{proof}
(i) Let $P_A$ be the band projection onto $A$.
Since $A$ is nonzero and $A\subseteq B_u$, we have
$u\notin A^d$, and hence $P_Au\ne0$.
Moreover, $P_Au\sqsubseteq u$, so $P_Au=u$,
since $u$ is a weak atom.
Thus $u\in A$, and consequently $B_u=A$.

(ii) Suppose that $X$ has a cofinal family of projection bands.
We now show that $u$ is an atom. Suppose, to the contrary,
that there exist nonzero $x,y\in X_+$ such that
\[
x,y\le |u|,\qquad x\perp y.
\]
By assumption, there exists a nonzero projection band
$E\subseteq B_x$. Since
\[
E\subseteq B_x\subseteq B_u,
\]
part (i) gives $E=B_u$, and hence $B_x=B_u$.
Thus $y\in B_x$. On the other hand, $x\perp y$
implies $y\in B_x^d$. Therefore
\[
y\in B_x\cap B_x^d=\{0\},
\]
a contradiction. Hence $u$ is an atom.
\end{proof}

We now turn to the order boundedness of order continuous orthogonally
additive operators. The following lemmas provide the groundwork for
our main theorem.

\begin{lemma}\label{lem:shrinking-intervals}
Let $X$ and $Y$ be Riesz spaces, and let $T:X\to Y$ be
an order continuous operator satisfying $T(0)=0$.
Suppose that
\[
x_\alpha\downarrow0.
\]
Then there exists $\alpha_0$ such that
\[
T([-x_{\alpha_0},x_{\alpha_0}])
\]
is order bounded in $Y$.
\end{lemma}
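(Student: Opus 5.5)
We argue by contradiction, in the spirit of the Abramovich--Sirotkin argument for linear operators. Suppose that for every index $\alpha$ the set $T([-x_\alpha,x_\alpha])$ is not order bounded in $Y$. We then want to build a net that order converges to $0$ in $X$ whose image under $T$ is not order bounded eventually. That contradicts order continuity: if $T(z_\gamma)\xrightarrow{o}T(0)=0$, then $|T(z_\gamma)|\le w_{\beta}$ for $\gamma$ beyond some index, for a fixed $\beta$, so a tail of the image is order bounded.

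The construction is as follows. Let $\Gamma$ be the index set of $(x_\alpha)$. Define a new directed set
\[
\Lambda:=\{(\alpha,z):\alpha\in\Gamma,\ z\in[-x_\alpha,x_\alpha]\},
\]
ordered by $(\alpha,z)\le(\alpha',z')$ if and only if $\alpha\le\alpha'$; this preorder is directed because $\Gamma$ is. Consider the net $(z)_{(\alpha,z)\in\Lambda}$. For $(\alpha,z)\ge(\alpha_1,z_1)$ we have $|z|\le x_\alpha\le x_{\alpha_1}$. Since $x_\alpha\downarrow0$, this net order converges to $0$, with dominating net $(x_\beta)_{\beta\in\Gamma}$, which is exactly the witness required by the definition of o-convergence. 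By order continuity and $T(0)=0$, we get $T(z)\xrightarrow{o}0$ along $\Lambda$. Hence there is a $w\in Y_+$ (namely $w=w_{\beta}$ for some fixed $\beta$) and an index $(\alpha_0,z_0)\in\Lambda$ such that $|T(z)|\le w$ for all $(\alpha,z)\ge(\alpha_0,z_0)$.

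Every pair $(\alpha_0,z)$ with $z\in[-x_{\alpha_0},x_{\alpha_0}]$ satisfies $(\alpha_0,z)\ge(\alpha_0,z_0)$. Therefore $T([-x_{\alpha_0},x_{\alpha_0}])\subseteq[-w,w]$, which is the claim. The argument is in fact direct, so no contradiction is needed, and it uses neither orthogonal additivity nor any choice principle.

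The main point to check is that the preorder on $\Lambda$ is acceptable as a directed set for nets in Kelley's sense. Kelley only requires a reflexive, transitive, directed relation, so antisymmetry is not needed. If antisymmetry were required, we would instead order $\Lambda$ by $\alpha<\alpha'$ or $(\alpha,z)=(\alpha',z')$. This could fail to be directed when $\Gamma$ has a maximal element. In that case, however, $x_{\max}=0$, and the claim is trivial because $T([0,0])=\{0\}$.
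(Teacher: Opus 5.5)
Your proof is correct and follows the paper's argument: you use the same auxiliary directed set of pairs $(\alpha,z)$ with $|z|\le x_\alpha$, the same net $z$, and the same use of the eventual order bound on $T(z)$ that order continuity supplies. The only difference is the ordering: you use the preorder $\alpha\le\alpha'$, which is legitimate for Kelley nets as adopted in the paper, so you read off the bound on $T([-x_{\alpha_0},x_{\alpha_0}])$ at the tail index itself; the paper instead uses ``$\alpha<\beta$ or equality'', and therefore needs the separate maximal-element case and a choice of $\alpha_0>\alpha_1$.
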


\begin{proof}
If the index set has a maximal element $\alpha_0$, then
$x_{\alpha_0}=0$. Since $T(0)=0$, the assertion follows.

Suppose that the index set $\Lambda$ has no maximal element. Set
\[
\Gamma
:=
\bigcup_{\alpha\in\Lambda}
\{(\alpha,x): |x|\leq x_\alpha\},
\]
and define
\[
(\alpha,x)\preceq(\beta,y)
\quad\Longleftrightarrow\quad
\alpha<\beta
\quad\text{or}\quad
(\alpha,x)=(\beta,y).
\]
Then $\Gamma$ is directed. For $\lambda=(\alpha,x)\in\Gamma$, put
$z_\lambda:=x$. 
Fix $\beta\in\Lambda$ and set
$\lambda_0=(\beta,0)$. For every
$\lambda=(\alpha,z)\succeq\lambda_0$, we have
$\alpha\ge\beta$, and hence
\[
|z_\lambda|\le x_\alpha\le x_{\beta}.
\]
It follows from the
definition of order convergence that
$z_\lambda\xrightarrow{o}0$.

Hence, by the order continuity of $T$,
\[
T(z_\lambda)\xrightarrow{o}0.
\]
Therefore $(T(z_\lambda))$ is eventually order bounded. Thus there
exist $(\alpha_1,x_1)\in\Gamma$ and $w\in Y_+$ such that
\[
|T(z_\lambda)|\leq w
\]
whenever $\lambda\succeq(\alpha_1,x_1)$. Choose
$\alpha_0>\alpha_1$. Then, for every $x$ satisfying
$|x|\leq x_{\alpha_0}$,
\[
(\alpha_1,x_1)\preceq(\alpha_0,x),
\]
and hence $|T(x)|\leq w$. Therefore
$T([-x_{\alpha_0},x_{\alpha_0}])$ is order bounded.
\end{proof}

\begin{lemma}\label{lem:real-order-bounded}
Let $Y$ be a Riesz space. Then every order continuous
mapping $f:\mathbb R\to Y$ is order bounded.
\end{lemma}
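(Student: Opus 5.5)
We need to show that an order continuous $f:\mathbb R\to Y$ maps every bounded interval $[-a,a]$ to an order bounded subset of $Y$. Note that $f$ need not satisfy $f(0)=0$, so Lemma~\ref{lem:shrinking-intervals} is not directly applicable. We therefore work with translates: for each $t\in\mathbb R$ the map $g_t(s):=f(t+s)-f(t)$ is order continuous on $\mathbb R$ (since $s_\alpha\xrightarrow{o}s$ iff $t+s_\alpha\xrightarrow{o}t+s$ in $\mathbb R$) and satisfies $g_t(0)=0$.

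The plan is a local-to-global compactness argument.

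\textbf{Local step.} Fix $t\in\mathbb R$ and apply Lemma~\ref{lem:shrinking-intervals} to $g_t$ with the sequence $x_n=1/n\downarrow0$ in $\mathbb R$. This gives some $n_t$ and some $w_t\in Y_+$ with $|f(t+s)-f(t)|\le w_t$ for all $|s|\le 1/n_t$. Hence
\[
f\bigl([t-\delta_t,t+\delta_t]\bigr)\subseteq[-|f(t)|-w_t,\ |f(t)|+w_t],
\]
where $\delta_t:=1/n_t>0$. Thus $f$ is locally order bounded.

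\textbf{Global step.} Fix $a>0$. The open intervals $(t-\delta_t,t+\delta_t)$, $t\in[-a,a]$, cover the compact set $[-a,a]$. By Heine--Borel, finitely many of them, say for $t_1,\dots,t_k$, already cover $[-a,a]$. The element $u:=\sum_{i=1}^k\bigl(|f(t_i)|+w_{t_i}\bigr)$ then satisfies $|f(s)|\le u$ for every $s\in[-a,a]$. Since every order bounded subset of $\mathbb R$ lies in some $[-a,a]$, it follows that $f$ is order bounded.

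\textbf{Main obstacle.} The only delicate point is checking that Lemma~\ref{lem:shrinking-intervals} really applies to $g_t$. That lemma is stated for operators between Riesz spaces with $T(0)=0$ and uses only order continuity, and $g_t$ satisfies both. We also need to check that a decreasing sequence $x_n\downarrow0$ is an admissible net there, which it is.

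If one wants to avoid Heine--Borel (for instance with the later ZF concerns in mind), there is an alternative. Let $s^*$ be the supremum of those $s\ge0$ such that $f([0,s])$ is order bounded. If $s^*<\infty$, the local bound at $t=s^*$ extends boundedness past $s^*$, a contradiction; the negative side is handled symmetrically. This least-upper-bound argument needs no choice beyond the finitely many choices of $w_t$ made along the way.
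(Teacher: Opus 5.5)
Your proof is correct. Its global half (local bounds at each point, Heine--Borel on $[-a,a]$, then summing finitely many bounds) is the same as the paper's; the difference lies in how local order boundedness is obtained.

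The paper argues directly and by contradiction. Assuming every neighbourhood of $\lambda_0$ contains points where $|f|$ escapes every $w\in Y_+$, it builds a net indexed by triples $(n,w,\lambda)$ with $|\lambda-\lambda_0|<1/n$ and $|f(\lambda)|\nleq w$. It then contradicts the eventual order boundedness forced by $f(x_d)\xrightarrow{o}f(\lambda_0)$, absorbing $|f(\lambda_0)|$ into the bound. You instead translate: $g_t(s)=f(t+s)-f(t)$ is order continuous with $g_t(0)=0$. Lemma~\ref{lem:shrinking-intervals}, whose proof uses only order continuity and not orthogonal additivity, applied with $x_n=1/n$ gives the bound $|f(t)|+w_t$ on $[t-\delta_t,t+\delta_t]$ at once. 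This is more economical, since it shows the local step is just the preceding lemma after a shift. The paper's version, by contrast, is self-contained and does not depend on that lemma.

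One small remark on your comment about choice. Heine--Borel for $[a,b]$ is already a theorem of ZF, so it is not what needs avoiding. The only possible use of choice in your main argument is picking $w_t$ for all $t$ simultaneously. This is unnecessary: the cover needs only $\delta_t$ (take $n_t$ least), and bounds need to be chosen only for the finitely many $t_i$. The paper avoids the issue by covering with all open intervals on which $f$ is order bounded. Your least-upper-bound alternative is also fine.
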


\begin{proof}
We first show that $f$ is locally order bounded.
Fix $\lambda_0\in\mathbb R$ and suppose, to the
contrary, that for every $\delta>0$ and
$w\in Y_+$ there exists $\lambda\in\mathbb R$
such that
\[
|\lambda-\lambda_0|<\delta,
\qquad |f(\lambda)|\nleq w.
\]

Define
\[
D=\left\{
(n,w,\lambda)\in
\mathbb N\times Y_+\times\mathbb R:
|\lambda-\lambda_0|<\frac1n,\,
|f(\lambda)|\nleq w
\right\}.
\]
Equip $D$ with the preorder
\[
(n,w,\lambda)\preceq(m,v,\mu)
\quad\Longleftrightarrow\quad
n\le m,\quad w\le v.
\]
Then $D$ is directed. Indeed, given
$d_1=(n_1,w_1,\lambda_1)$ and
$d_2=(n_2,w_2,\lambda_2)$, put
\[
n=\max\{n_1,n_2\},
\qquad w=w_1\vee w_2.
\]
By our assumption, there exists $\lambda$
such that $(n,w,\lambda)\in D$, and this
element is an upper bound of $d_1$ and $d_2$.

For $d=(n,w,\lambda)\in D$, set
$x_d=\lambda$. Since
\[
|x_d-\lambda_0|<\frac1n,
\]
and the first coordinates of the indices
eventually exceed every positive integer,
we obtain
\[
x_d\xrightarrow{o}\lambda_0.
\]
By order continuity,
\[
f(x_d)\xrightarrow{o}f(\lambda_0).
\]
Hence there exist $d_0\in D$ and $u\in Y_+$
such that
\[
|f(x_d)|\le u
\qquad(d\succeq d_0).
\]

Write $d_0=(n_0,w_0,\lambda')$ and put
$w=w_0\vee u$. By our assumption, there
exists $\lambda$ such that
\[
d=(n_0,w,\lambda)\in D.
\]
Since $d\succeq d_0$, we have
\[
|f(\lambda)|\le u\le w,
\]
contrary to the definition of $D$.
Thus $f$ is locally order bounded.

Now let $[a,b]$ be a compact interval.
Consider the family of all open intervals
$I\subseteq\mathbb R$ for which $f(I)$
is order bounded in $Y$.
By local order boundedness, this family
is an open cover of $[a,b]$.

By compactness, there exist finitely many
such intervals $I_1,\ldots,I_m$ satisfying
\[
[a,b]\subseteq\bigcup_{i=1}^m I_i.
\]
For each $i$, choose $u_i\in Y_+$ such that
\[
|f(\lambda)|\le u_i
\qquad(\lambda\in I_i).
\]
Then $u=u_1+\cdots+u_m$ satisfies
\[
|f(\lambda)|\le u
\qquad(\lambda\in[a,b]).
\]
Thus $f([a,b])$ is order bounded.

Every bounded interval is contained in
a compact interval, so the conclusion
follows.
\end{proof}

\begin{lemma}\label{lem:decreasing-fragments}
Let $X$ be a Riesz space with a cofinal family of projection bands,
and let $0<e\in X_+$. If $e$ has no nonzero atomic fragment, then
there exists a net $(e_\gamma)$ in
$\mathfrak F_e\setminus\{0\}$ such that
\[
e_\gamma\downarrow0.
\]
\end{lemma}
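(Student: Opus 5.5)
The plan is to take the index set to be the set of all nonzero fragments of $e$ itself, ordered by reverse inclusion among fragments. Put
\[
\Gamma:=\mathfrak F_e\setminus\{0\},
\qquad
f\preceq g \iff g\sqsubseteq f,
\]
and let $e_f:=f$. The net is then decreasing by construction, since $g\sqsubseteq f\sqsubseteq e$ and $e\ge0$ give $0\le g\le f$. Two things remain: showing that $\Gamma$ is directed, and showing that $\inf\Gamma=0$.

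\emph{Directedness.} Take $f,g\in\Gamma$ and set $h:=f\wedge g$. For positive fragments of $e$ the meet is again a fragment of both, so $h\sqsubseteq f$ and $h\sqsubseteq g$. If $h\neq0$, it is the required upper bound. If $h=0$, then $f\perp g$ and no common nonzero subfragment exists, so $\Gamma$ fails to be directed under this order.

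To repair this, I would replace $\Gamma$ by a cofinal-type family of the form
\[
\{\,e-\textstyle\sum F : F \text{ a finite set of pairwise disjoint nonzero fragments}\,\},
\]
or more simply use a directed set built from finitely many choices. Concretely, let $\Gamma$ be the set of finite subsets $F\subseteq\Gamma_0$ (where $\Gamma_0$ is the set of nonzero fragments), ordered by inclusion, and choose a nonzero fragment $e_F\sqsubseteq \bigwedge_{f\in F} f$. This selection needs choice and may fail when the meet is $0$, so instead I will use the following net. Index by pairs $(F,f)$ and take $e_{F,f}:=f$, where $f$ ranges over the nonzero fragments of $e$ that are $\le$ every element of a given family. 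The same disjointness issue arises again.

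The cleanest route is therefore to show directly that for each $0\neq f\sqsubseteq e$ there exist two disjoint nonzero fragments of $f$, and then to use the net indexed by all pairs $(f,\varepsilon)$ where this holds. In fact, the decreasing requirement in the statement most likely means only that $\inf\{e_\gamma\}=0$ along a directed index, so I will take $\Gamma=\mathfrak F_e\setminus\{0\}$ with the order $f\preceq g$ iff $g\le f$ and accept a preorder. As shown above, this still does not give directedness when $f\perp g$.

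\emph{Infimum zero.} Fix a lower bound $0\le y$ of all nonzero fragments of $e$; the goal is $y=0$. By cofinality, every nonzero band contains a nonzero projection band $D$, and $P_De$ is a fragment of $e$. Because $e$ has no atomic fragment, Proposition~\ref{lem:weak-atom-cofinal} implies that no nonzero fragment is a weak atom. Hence every nonzero fragment splits into two disjoint nonzero fragments $f_1\sqcup f_2$. From $y\le f_1$ and $y\le f_2$ we get $y\le f_1\wedge f_2=0$, so $y=0$.

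\emph{Main obstacle.} The genuine obstacle is directedness, and I expect it to be handled by choosing the net $(e_\gamma)$ not over all fragments but over the fragments $P_De$, with $D$ running through a suitable \emph{chain} of projection bands (via a maximality or Zorn argument) whose infimum of fragments is $0$. Along a chain, directedness is automatic. The infimum is $0$ because, if it were some $y>0$, cofinality would give a projection band inside $B_y$ that could be used to split further, contradicting maximality.
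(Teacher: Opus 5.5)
\noindent Your proposal never produces a directed index set, so the net is not actually constructed. You correctly see that $\mathfrak F_e\setminus\{0\}$ under reverse $\sqsubseteq$ is not directed, since disjoint nonzero fragments have no common nonzero subfragment. However, none of the repairs you try works. Falling back on a non-directed index is not an option either: $e_\gamma\downarrow0$ means a decreasing net over a directed set, and that is exactly what Lemma~\ref{lem:shrinking-intervals} needs when this lemma is applied. Your ``infimum zero'' paragraph is correct, but only for the whole non-directed family of nonzero fragments. It uses a single splitting $e=f_1\sqcup f_2$ and says nothing about a chain, which follows only one branch of such splittings.

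\noindent The missing argument is the maximal-chain one that you only name in your last paragraph, and your sketch skips the step that makes it work. Take a maximal chain $\mathcal C\subseteq\mathfrak F_e\setminus\{0\}$ containing $e$ (Hausdorff maximal principle); it is directed under the reverse order. Since $X$ need not be Dedekind complete, you cannot speak of ``the infimum $y$''. You must instead show that every lower bound $v$ of $\mathcal C$ satisfies $v^+=0$.
\begin{itemize}
\item If $v^+>0$, pick a nonzero projection band $D\subseteq B_{v^+}$ and set $d=P_De\sqsubseteq e$. Then $d\ge P_Dv^+>0$; otherwise $D\subseteq B_{v^+}\subseteq D^d$.
\item The key verification is that $d$ lies below the \emph{whole} chain. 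For $c\in\mathcal C$, the facts $0\le v^+\le c$ and $c\perp(e-c)$ give $e-c\in B_{v^+}^d\subseteq D^d$. Hence $d=P_Dc\le c$, i.e.\ $d\sqsubseteq c$.
\item Only now does maximality apply, forcing $d=\min\mathcal C$.
\item Since $d$ is not an atom, Proposition~\ref{lem:weak-atom-cofinal} says it is not a weak atom. So some $b\in\mathfrak F_d\setminus\{0,d\}$ extends $\mathcal C$, contradicting maximality.
\end{itemize}
The phrase ``use a projection band in $B_y$ to split further'' does not by itself give an element comparable with all of $\mathcal C$. Without the computation $P_De\le c$ for every $c\in\mathcal C$, which is where cofinality is used a second time, the contradiction with maximality is not obtained.
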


\begin{proof}

By \cite[Proposition 3.2]{MykhaylyukPlievPopov21},
$\mathfrak F_e$ is partially ordered by $\sqsubseteq$.
Since $e\ge0$, this order coincides with the order
inherited from $X$
(see \cite[Proposition 3.4(ii)]{MykhaylyukPlievPopov21}).
In particular, every element of $\mathfrak F_e$ is
positive, so $0$ is a lower bound of $\mathfrak F_e$.

By the Hausdorff maximal principle, the chain $\{e\}$
is contained in a maximal chain
\[
\mathcal C\subseteq\mathfrak F_e\setminus\{0\}.
\]
We claim that $\inf C=0$.
Suppose, to the contrary, that $C$ has a lower
bound $v$ with $v^+>0$. Since $0$ is also a
lower bound of $C$, the element $v^+$ is a
positive lower bound of $C$.

 Since
$X$ has a cofinal family of projection bands, there exists a nonzero
projection band $D\subseteq B_{v^+}$. Let $P_D$ be the corresponding
band projection and put
\[
d:=P_De.
\]
We have $d>0$. Indeed, if $P_Dv^+=0$, then $v^+\in D^d$, and hence
$B_{v^+}\subseteq D^d$, contradicting $0\ne D\subseteq B_{v^+}$. Since
$v^+\leq e$,
\[
0<P_Dv^+\leq P_De=d.
\]
Moreover, $d\sqsubseteq e$.

For every $c\in\mathcal C$, we have $v^+\leq c\sqsubseteq e$.
Hence $v^+\perp(e-c)$, and therefore
\[
e-c\in B_{v^+}\subseteq D^d.
\]
Consequently,
\[
d=P_De=P_Dc\leq c
\qquad(c\in\mathcal C).
\tag{1}
\]
Thus $d$ is comparable with every member of $\mathcal C$. By
maximality, $d\in\mathcal C$, and (1) shows that $d$ is the minimum
of $\mathcal C$.

By assumption, $d$ is not an atom. By
Proposition~\ref{lem:weak-atom-cofinal}, $d$ is not a weak atom.
Hence there exists
\[
b\in\mathfrak F_d\setminus\{0,d\}.
\]
Then $0<b<d\leq c$ for every $c\in\mathcal C$. Thus $b$ is
comparable with every member of $\mathcal C$, while
$b\notin\mathcal C$, contradicting the maximality of $\mathcal C$. Therefore $v^+ =0$ and so $v\le 0$. 

Thus, every lower bound $v$ of $C$ satisfies $v\le 0$.
Since $0$ is itself a lower bound of $C$,
we conclude that $\inf C=0$.

Regard $\mathcal C$ as a directed set with the reverse order and
define $e_c:=c$ for $c\in\mathcal C$. Then
\[
e_c\downarrow0,
\]
and every $e_c$ is a nonzero fragment of $e$.
\end{proof}

\begin{lemma}\label{lem:local-bounded-fragment-cofinal}
Let $X$ be a Riesz space with a cofinal family of projection bands,
let $Y$ be a Riesz space, and let $T:X\to Y$ be an order
continuous operator satisfying $T(0)=0$.
Then, for every $0<e\in X_+$, there exists
$0<u\sqsubseteq e$ such that
\[
T([-u,u])
\]
is order bounded in $Y$.
\end{lemma}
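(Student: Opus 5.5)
Fix $0<e\in X_+$. The proof splits according to whether $e$ has a nonzero atomic fragment. In both cases the preceding lemmas supply the conclusion almost directly, so the plan is mainly a case analysis that routes each situation to the right lemma.

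\emph{Case 1: $e$ has no nonzero atomic fragment.} By Lemma~\ref{lem:decreasing-fragments} there is a net $(e_\gamma)$ in $\mathfrak F_e\setminus\{0\}$ with $e_\gamma\downarrow0$. We apply Lemma~\ref{lem:shrinking-intervals} to this net; this is legitimate because $T$ is order continuous with $T(0)=0$. It gives an index $\gamma_0$ such that $T([-e_{\gamma_0},e_{\gamma_0}])$ is order bounded. Then $u:=e_{\gamma_0}$ satisfies $0<u\sqsubseteq e$, and we are done.

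\emph{Case 2: $e$ has a nonzero atomic fragment $a\sqsubseteq e$.} Since $e\ge0$, the fragment satisfies $a>0$. The band $B_a$ generated by the atom $a$ is one-dimensional (Luxemburg--Zaanen, as recalled in the preliminaries). Hence every $x$ with $|x|\le a$ lies in $B_a$ and has the form $x=\lambda a$ with $\lambda\in[-1,1]$, so
\[
[-a,a]=\{\lambda a:\ \lambda\in[-1,1]\}.
\]
Define $f:\mathbb R\to Y$ by $f(\lambda):=T(\lambda a)$.

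\emph{Main obstacle: order continuity of $f$.} Suppose $\lambda_\alpha\xrightarrow{o}\lambda$ in $\mathbb R$, witnessed by a net $\varepsilon_\beta\downarrow0$ of reals. Then
\[
|\lambda_\alpha a-\lambda a|\le \varepsilon_\beta a\quad\text{eventually},
\]
and $\varepsilon_\beta a\downarrow0$ in $X$ because $X$ is Archimedean. Hence $\lambda_\alpha a\xrightarrow{o}\lambda a$, and order continuity of $T$ gives $f(\lambda_\alpha)\xrightarrow{o}f(\lambda)$. So $f$ is order continuous.

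By Lemma~\ref{lem:real-order-bounded}, $f$ is order bounded, so $f([-1,1])=T([-a,a])$ is order bounded in $Y$. Taking $u:=a$ gives $0<u\sqsubseteq e$.

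Both cases yield the required fragment, which proves the lemma.
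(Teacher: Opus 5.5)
Your proof is correct and follows the paper's argument: you use the same case split into $e$ having a nonzero atomic fragment or none. In the atomic case you use $[-a,a]=\{\lambda a:|\lambda|\le1\}$, $f(\lambda)=T(\lambda a)$ and Lemma~\ref{lem:real-order-bounded}; in the non-atomic case you use Lemma~\ref{lem:decreasing-fragments} followed by Lemma~\ref{lem:shrinking-intervals}. Your explicit check that $f$ is order continuous, via $\varepsilon_\beta a\downarrow0$ by the Archimedean property, fills in a step the paper only asserts.
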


\begin{proof}
Fix $0<e\in X_+$.

\noindent\textbf{Case 1.}
Suppose that $e$ has a nonzero atomic fragment $a$.
Since $a$ is an atom,
\[
[-a,a]=\{\lambda a:-1\le\lambda\le1\}.
\]
Define $f:\mathbb R\to Y$ by
\[
f(\lambda)=T(\lambda a).
\]
Since $T$ is order continuous, so is $f$.
By Lemma~\ref{lem:real-order-bounded}, $f$ is
order bounded. Consequently,
\[
T([-a,a])=f([-1,1])
\]
is order bounded. Thus we may take $u=a$.

\medskip
\noindent\textbf{Case 2.}
Suppose that $e$ has no nonzero atomic fragment.
By Lemma~\ref{lem:decreasing-fragments} there exists a net
$(e_\gamma)$ in $\mathcal F_e\setminus\{0\}$ such that
\[
e_\gamma\downarrow0.
\]
By Lemma~3.5, there exists $\gamma_0$ such that
\[
T([-e_{\gamma_0},e_{\gamma_0}])
\]
is order bounded. Taking $u=e_{\gamma_0}$
completes the proof.
\end{proof}

\begin{theorem}\label{thm:order-continuous-oao-cofinal}
Let $X$ be a Riesz space with a cofinal family of projection bands
and let $Y$ be a Riesz space. Then every order continuous
orthogonally additive operator $T:X\to Y$ is order bounded.
In particular, $T$ is an abstract Uryson operator.
\end{theorem}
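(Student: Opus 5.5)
The plan is to reduce order boundedness of $T$ on an arbitrary order interval $[-e,e]$, $e\in X_+$, to the local statement of Lemma~\ref{lem:local-bounded-fragment-cofinal}, and to glue local bounds together using orthogonal additivity. A bound on $[-u,u]$ for a fragment $u\sqsubseteq e$ must be transported to elements $x\in[-e,e]$. Writing $x=P x+(x-Px)$ with a band projection $P$ is not available in general, since only projection bands inside bands are guaranteed. I will therefore work with \emph{fragments} of $x$ relative to fragments of $e$.

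\textbf{Step 1 (a maximal disjoint system).} Fix $0<e\in X_+$ and let $\mathcal G$ be the set of all $0<u\sqsubseteq e$ such that $T$ is order bounded on the set $\{x:|x|\le u\}$. By Lemma~\ref{lem:local-bounded-fragment-cofinal}, applied to every nonzero fragment of $e$, the set $\mathcal G$ is dense in $\mathfrak F_e$. Using Zorn's lemma, choose a maximal pairwise disjoint family $(u_i)_{i\in I}\subseteq\mathcal G$. Density gives $\sup_{F}\sum_{i\in F}u_i=e$ over finite $F\subseteq I$. Indeed, if a lower bound gap remained, the argument of Lemma~\ref{lem:decreasing-fragments} with a projection band $D\subseteq B_{v}$ produces a nonzero fragment of $e$ disjoint from all $u_i$, and hence a new element of $\mathcal G$, contradicting maximality.

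\textbf{Step 2 (finiteness via order continuity).} This is the main obstacle. A priori $I$ is infinite and the bounds $w_i$ on the pieces cannot simply be added. The idea is to show that some finite subfamily already suffices up to a remainder on which $T$ is controlled, by mimicking the proof of Lemma~\ref{lem:shrinking-intervals}. Consider the net $r_F:=e-\sum_{i\in F}u_i\downarrow 0$ indexed by finite $F$. Lemma~\ref{lem:shrinking-intervals} yields a finite $F_0$ such that $T$ is order bounded, say by $w_0$, on $[-r_{F_0},r_{F_0}]$. The elements $u_i$, $i\in F_0$, together with $r_{F_0}$ form a finite disjoint decomposition $e=r_{F_0}\sqcup\bigsqcup_{i\in F_0}u_i$ into fragments, and on each piece's interval $T$ is bounded by $w_0$ or by $w_i$.

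\textbf{Step 3 (gluing by orthogonal additivity).} Let $|x|\le e$ with $e=e_1\sqcup\dots\sqcup e_n$, where the $e_k$ are fragments of $e$ and $T$ is bounded by $w_k$ on $[-e_k,e_k]$. I need $x=x_1\sqcup\dots\sqcup x_n$ with $|x_k|\le e_k$. Without a projection onto $B_{e_k}$ this is delicate. I would first try the Riesz decomposition $x^\pm=\sum_k x^\pm_k$ with $0\le x^\pm_k\le e_k$. Disjointness of the $e_k$ forces $x_k:=x_k^+-x_k^-$ to be pairwise disjoint, so that $x=\bigsqcup_k x_k$ and $|x_k|\le e_k$. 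Orthogonal additivity then gives $|T(x)|\le\sum_k|T(x_k)|\le\sum_k w_k$, and $T([-e,e])$ is order bounded. Since every order bounded set lies in some $[-e,e]$, $T$ is order bounded, and hence $T\in U(X,Y)$. I also note that Step 1 may be avoidable: applying Lemma~\ref{lem:shrinking-intervals} directly in Step 2 to the maximal family already handles the remainder. The real content is verifying that the remainder net decreases to $0$, which relies on the cofinal projection band hypothesis.
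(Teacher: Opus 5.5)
Your proposal is correct and is essentially the paper's proof: a remainder net $e-u$ over a directed family of good fragments is shown to decrease to $0$ via a projection band $D\subseteq B_v$ and Lemma~\ref{lem:local-bounded-fragment-cofinal}, Lemma~\ref{lem:shrinking-intervals} then yields one good remainder, and the finitely many pieces are glued by Riesz decomposition and orthogonal additivity (your bound $|x_k|\le e_k$ holds because $0\le x_k^{+}\le x^{+}$ and $0\le x_k^{-}\le x^{-}$ force $x_k^{+}\perp x_k^{-}$). The only difference is that you index by finite sums of a Zorn-maximal disjoint family in $\mathcal G$, whereas the paper uses the set $\mathcal A_e$ of all good fragments and proves it is upward directed by the same gluing argument; this avoids an extra use of choice and lets that step be reused in Proposition~\ref{prop:atomic-order-bounded} and Theorem~\ref{thm:dedekind-complete-DC}.
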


\begin{proof}
Fix $e\in X_+$. By
\cite[Proposition 3.4(ii)]{MykhaylyukPlievPopov21},
$\mathfrak F_e\subseteq X_+$. Set
\[
\mathcal A_e
:=
\{u\in\mathfrak F_e:
T([-u,u])\text{ is order bounded}\}.
\]
Clearly, $0\in\mathcal A_e$. Moreover, if
$u\in\mathcal A_e$ and $v\sqsubseteq u$, then
$0\le v\le u$, and hence
$[-v,v]\subseteq[-u,u]$.
Thus $v\in\mathcal A_e$.

\medskip
\noindent\textbf{Step 1.}

We show that $\mathcal A_e$ is directed.
Let $u,v\in\mathcal A_e$ and put
\[
r=(v-u)^+=v-(u\wedge v).
\]
Since $u,v\sqsubseteq e$, we have
\[
0\le r\le v,
\qquad
0\le r\le e-u.
\]
The first inequality implies that
$T([-r,r])$ is order bounded, and hence
$r\in\mathcal A_e$.
As $u\perp(e-u)$, the second inequality
yields $u\perp r$. Consequently,
\[
u\sqcup r=u+r=u\vee v\sqsubseteq e,
\]
where the last relation follows from
\cite[Proposition~3.4]{MykhaylyukPlievPopov21}.

For every $x$ with $|x|\le u\sqcup r$,
the Riesz decomposition property yields
\[
x=x_1+x_2,
\qquad
|x_1|\le u,\quad |x_2|\le r.
\]
Since $u\perp r$, we have $x_1\perp x_2$,
and therefore
\[
T(x)=T(x_1)+T(x_2).
\]
It follows that
$T([-u\sqcup r,u\sqcup r])$ is order bounded,
so $ u\vee v\in\mathcal A_e$.
Thus $\mathcal A_e$ is directed.

\medskip
\noindent\textbf{Step 2.}
For $u\in\mathcal A_e$, put
\[
b_u:=e-u.
\]
Then $(b_u)_{u\in\mathcal A_e}$ is decreasing.
We show that $b_u\downarrow0$.

Suppose, to the contrary, that some
$0<v\in X_+$ satisfies
\[
v\le b_u
\qquad(u\in\mathcal A_e).
\]
Choose a nonzero projection band
$D\subseteq B_v$, let $P_D$ be its
band projection, and put
\[
d:=P_De.
\]
Since  $v\notin D^d$,  $0< P_Dv \le d$.
Moreover, since $v\le b_u=e-u$, we have
$u\perp v$, and hence
$u\in B_v^d\subseteq D^d$. Therefore
\[
d=P_De=P_Db_u\le b_u
\qquad(u\in\mathcal A_e).
\tag{4}
\]

By Lemma~\ref{lem:local-bounded-fragment-cofinal},
there exists $0<w\sqsubseteq d$ such that
$T([-w,w])$ is order bounded.
Hence $w\in\mathcal A_e$.
Taking $u=w$ in (4) gives
\[
0<w\le d\le b_w=e-w.
\]
But $w\sqsubseteq e$, so
$w\perp(e-w)$, a contradiction.
Therefore $b_u\downarrow0$.

\medskip
\noindent\textbf{Step 3.}
By Lemma~\ref{lem:shrinking-intervals},
there exists $u_0\in\mathcal A_e$ such that
\[
T([-b_{u_0},b_{u_0}])
\]
is order bounded. Hence
$b_{u_0}\in\mathcal A_e$.
Since
\[
e=u_0\sqcup b_{u_0}.
\]

By the Riesz decomposition theorem and the
orthogonal additivity of $T$, we conclude that
$T([-e,e])$ is order bounded.

Finally, every order bounded subset of $X$
is contained in $[-e,e]$ for some $e\in X_+$.
Therefore $T$ is order bounded, and hence
is an abstract Uryson operator.
\end{proof}

Mykhaylyuk and Popov
\cite[Theorem~7.1]{MykhaylyukPopov22}
constructed an order continuous and order bounded
orthogonally additive operator on $C[0,1]$,
which does not have a cofinal family of projection
bands. Moreover, Abramovich and Sirotkin
\cite[Theorem~2.1]{AbramovichSirotkin05}
proved that every order continuous linear operator
between arbitrary Riesz spaces is automatically
order bounded, without any additional assumptions
on the domain. These results naturally raise the
question of whether the assumption on $X$ in
Theorem~\ref{thm:order-continuous-oao-cofinal}
can be omitted.

\begin{problem}
Can the assumption that $X$ has a cofinal family
of projection bands be removed from
Theorem~\ref{thm:order-continuous-oao-cofinal}?
In other words, is every order continuous
orthogonally additive operator between arbitrary
Riesz spaces necessarily order bounded?
\end{problem}

The proof of Theorem~\ref{thm:order-continuous-oao-cofinal}
relies on the Hausdorff maximal principle through
Lemma~\ref{lem:decreasing-fragments}, which is used
in Case~2 of the proof of
Lemma~\ref{lem:local-bounded-fragment-cofinal}.
This raises the following question.

\begin{problem}
Can Theorem~\ref{thm:order-continuous-oao-cofinal}
be proved in ZF, without assuming the axiom
of choice?
\end{problem}

We next show that the second question has an
affirmative answer when $X$ is an Archimedean
atomic Riesz space. Although atomicity is a
stronger structural assumption than the existence
of a cofinal family of projection bands, it allows
us to avoid the maximality argument used in
Lemma~\ref{lem:decreasing-fragments}.
In fact, the following result can be established
entirely within ZF.

\begin{proposition}\label{prop:atomic-order-bounded}
Let $X$ be an Archimedean atomic Riesz space and
let $Y$ be a Riesz space. Then every order continuous
orthogonally additive operator $T:X\to Y$ is
order bounded.
\end{proposition}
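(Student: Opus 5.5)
Fix $e\in X_+$. The goal is to show $T([-e,e])$ is order bounded, reusing the three-step scheme of the proof of Theorem~\ref{thm:order-continuous-oao-cofinal}. The only use of choice there beyond routine steps is Lemma~\ref{lem:decreasing-fragments}, which enters through Case~2 of Lemma~\ref{lem:local-bounded-fragment-cofinal}. In an atomic space Case~2 never occurs, so the plan is to replace Lemma~\ref{lem:local-bounded-fragment-cofinal} by a choice-free version that uses only Case~1.

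\textbf{Local bounded fragments.} Let $0<e'\le e$ be a fragment of $e$ (or any positive element below $e$). By atomicity, $e'$ dominates an atom $a$. We then produce a nonzero atomic fragment of $e'$ without choice. The band $B_a$ is a one-dimensional projection band by \cite[Theorem~26.4(iii)]{LuxemburgZaanen71}, so $a':=P_{B_a}e'$ is a fragment of $e'$. It is a nonzero multiple of $a$, hence an atom. Since $e'\sqsubseteq e$, $a'$ is also a fragment of $e$.

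Apply Lemma~\ref{lem:real-order-bounded} to $f(\lambda)=T(\lambda a')$. Since $[-a',a']=\{\lambda a':|\lambda|\le1\}$, the set $T([-a',a'])$ is order bounded. That lemma used only a directed-set construction and the Heine--Borel compactness of $[a,b]$. Heine--Borel is provable in ZF via the supremum argument, so this step is choice-free.

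\textbf{Remaining steps.} Next, I redo Steps~1--3 of the proof of Theorem~\ref{thm:order-continuous-oao-cofinal} with $\mathcal A_e$ defined as before.

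Step~1 (directedness) is purely algebraic and choice-free.

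Step~2 shows $b_u=e-u\downarrow0$. If $0<v\le b_u$ for all $u\in\mathcal A_e$, take an atom $a\le v$ and put $D=B_a$, a projection band inside $B_v$. Then $d=P_De$ satisfies $d\le b_u$ for all $u$. Now $d$ is a nonzero multiple of $a$, hence an atomic fragment of $e$, and by the previous paragraph $d\in\mathcal A_e$. This gives $d\le e-d$ with $d\perp e-d$, a contradiction. Only one atom is chosen here, an existential instantiation that is not a use of AC.

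Step~3 applies Lemma~\ref{lem:shrinking-intervals}, whose proof builds an explicit directed set and uses the order continuity of $T$. It gives $u_0$ with $b_{u_0}\in\mathcal A_e$. Then $e=u_0\sqcup b_{u_0}$, and Riesz decomposition with orthogonal additivity gives that $T([-e,e])$ is order bounded.

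\textbf{Main obstacle.} The main work is auditing that none of the cited facts secretly use choice. This covers the Riesz decomposition, the properties of fragments from \cite{MykhaylyukPlievPopov21}, and the existence of band projections onto bands generated by atoms. For the last, I would verify directly that for an atom $a$ and $x\ge0$, $\sup\{x\wedge na\}$ exists and equals $\lambda a$ with $\lambda=\sup\{t:ta\le x\}$, using Archimedeanity. That supremum is the projection, with no choice needed. This explicit computation is the step I expect to need the most care.
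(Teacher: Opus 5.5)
Your proof is correct and follows essentially the same route as the paper's. Case~1 of Lemma~\ref{lem:local-bounded-fragment-cofinal} is applied to the atomic fragment $d=P_{B_a}e$, and Steps~1--3 of Theorem~\ref{thm:order-continuous-oao-cofinal} are rerun, with $b_u\downarrow0$ obtained from an atom $a\le v$. The only differences are cosmetic: you close Step~2 by reusing the inequality $d=P_Db_u\le b_u$ to get $d\le e-d$, where the paper instead observes $a\perp d$ while $0<a\le d$, and your ZF audit (including the direct construction of the projection onto $B_a$) makes explicit what the paper only asserts.
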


\begin{proof}
We first observe that $X$ has a cofinal family of
projection bands. Indeed, let $B$ be a nonzero band
in $X$. By atomicity, there exists an atom
$0<a\in B$. Since $X$ is Archimedean, the principal
band $B_a$ is a projection band. Thus
\[
\{0\}\subsetneq B_a\subseteq B,
\]
showing that $X$ has a cofinal family of
projection bands.

Fix $0<e\in X_+$ and define
\[
\mathcal A_e
:=
\{u\in\mathfrak F_e:
T([-u,u])\text{ is order bounded}\}.
\]
By Step~1 of the proof of
Theorem~\ref{thm:order-continuous-oao-cofinal},
$\mathcal A_e$ is directed.

For each $u\in\mathcal A_e$, put $b_u=e-u$.
Then $(b_u)_{u\in\mathcal A_e}$ is decreasing.
We show that $b_u\downarrow0$.

Suppose, to the contrary, that there exists
$0<v\in X_+$ such that
\[
v\le b_u \qquad(u\in\mathcal A_e).
\]
By atomicity, choose an atom $a$ with
$0<a\le v$. Since $B_a$ is a projection band,
put
\[
d:=P_{B_a}e.
\]
Then $d$ is a nonzero atomic fragment of $e$.
By Case~1 of the proof of
Lemma~\ref{lem:local-bounded-fragment-cofinal},
$T([-d,d])$ is order bounded. Hence
$d\in\mathcal A_e$.

On the other hand, $a\le v\le e-u$ for every
$u\in\mathcal A_e$. Since $u\sqsubseteq e$,
we have $u\perp(e-u)$, and therefore
\[
a\perp u \qquad(u\in\mathcal A_e).
\]
In particular, $a\perp d$. But
\[
0<a\le P_{B_a}e=d,
\]
a contradiction. Thus $b_u\downarrow0$.

By Lemma~\ref{lem:shrinking-intervals},
there exists $u_0\in\mathcal A_e$ such that
$T([-b_{u_0},b_{u_0}])$ is order bounded.

Since
\[
e=u_0\sqcup b_{u_0},
\]
as in Step~3 of the proof of
Theorem~\ref{thm:order-continuous-oao-cofinal},
the Riesz decomposition theorem and the
orthogonal additivity of $T$ imply that
$T([-e,e])$ is order bounded.

Since $e\in X_+$ was arbitrary, $T$ is
order bounded.
\end{proof}

Recall that every Dedekind complete Riesz space has a
cofinal family of projection bands. However, Dedekind
completeness and atomicity are independent properties.
For instance, $L^\infty[0,1]$ is Dedekind complete but
not atomic, whereas the Riesz space $c$ of convergent
real sequences is atomic but not Dedekind complete.

We next show that, when $X$ is Dedekind complete,
every order continuous orthogonally additive operator
$T:X\to Y$ is order bounded. Moreover, this result
can be established in ZF together with the axiom
of dependent choice (DC), without invoking the
full axiom of choice.

\begin{theorem}\label{thm:dedekind-complete-DC}
Let $X$ be a Dedekind complete Riesz space and
let $Y$ be a Riesz space. Then every order continuous
orthogonally additive operator $T:X\to Y$ is
order bounded.
\end{theorem}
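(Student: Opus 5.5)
The plan is to follow the proof of Theorem~\ref{thm:order-continuous-oao-cofinal}, keeping track of where choice is used. The goal is to replace the Hausdorff maximal principle in Lemma~\ref{lem:decreasing-fragments} by an argument that uses Dedekind completeness, and at worst DC. Two observations make the rest of the proof choice-free.

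First, in a Dedekind complete $X$ every band is a projection band, and $P_{B_v}e=\sup_n (e\wedge n|v|)$ is given by an explicit formula. So in Step~2 we can take $D=B_v$ canonically, with no choice of projection band.

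Second, Lemmas~\ref{lem:shrinking-intervals} and~\ref{lem:real-order-bounded} are proved by explicit constructions in ZF, and so is Step~1. The finitely many existential instantiations, such as picking $w\in Y_+$ or a finite subcover, are harmless. So the only place where choice is genuinely needed is Case~2 of Lemma~\ref{lem:local-bounded-fragment-cofinal}.

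I would also restructure Step~2 so that it uses Dedekind completeness directly. Put $s:=\sup\mathcal A_e$. Since $\mathcal A_e$ is an upward directed set of fragments of $e$, $s$ is again a fragment of $e$: it equals $P_{B}e$ for the band $B$ generated by $\mathcal A_e$. Put $v:=e-s$, so that $\inf_u b_u=v$. If $v=0$, Step~3 goes through verbatim and gives order boundedness of $T([-e,e])$. So suppose $v>0$. Then no nonzero fragment of $v$ lies in $\mathcal A_e$, since otherwise adding it to members of $\mathcal A_e$ (Step~1) would push $s$ up. By Case~1 of Lemma~\ref{lem:local-bounded-fragment-cofinal}, which uses only Lemma~\ref{lem:real-order-bounded} and hence only ZF, $v$ has no atomic fragment. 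Thus $\mathfrak F_v$ is a complete atomless Boolean algebra. It remains to derive a contradiction from the assumption
\[
T([-c,c])\ \text{is not order bounded for every }0<c\sqsubseteq v .
\]

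For this, I would build a net converging to $0$ with a canonical, directed index set, imitating Lemma~\ref{lem:shrinking-intervals}. Let $\mathcal P$ be the set of proper fragments $p\sqsubseteq v$, $p\ne v$. Atomlessness gives $\sup\mathcal P=v$: split $v=a\sqcup b$ with $a,b\ne0$; this is a single existential instance. Index by
\[
\Gamma=\{(S,x): S\subseteq\mathcal P\ \text{finite},\ |x|\le v-{\textstyle\bigvee}S\},
\]
ordered by $S\subsetneq S'$ or equality. Then $v-\bigvee S\downarrow0$ over finite $S$, so $x_{(S,x)}:=x\xrightarrow{o}0$. Order continuity then bounds $T(x)$ by a fixed $w$ on the set of all $(S,x)$ beyond some $(S_1,x_1)$.

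The main obstacle is that $v-\bigvee S$ may already vanish at $S_1$, so this bound could be vacuous. My first attempt to fix it is to refine the index set so that $v-\bigvee S$ stays nonzero. For instance, I would fix the splitting $v=a\sqcup b$ and allow only $S$ consisting of fragments of $a$, or of $b$, but not covering both. Alternatively, I would interleave two copies, indexed by $\mathbb N$, and use DC to choose a sequence of such splittings $v\sqsupseteq c_1\sqsupseteq c_2\sqsupseteq\cdots$ of nonzero fragments, each with a witness $x_n$, $|x_n|\le c_n$, $|T(x_n)|\not\le n\,w_n$. These witnesses would be combined with the finite-set net above in a product directed set. Order convergence is then ensured by the finite-set coordinate, while the DC-chosen coordinate guarantees that every tail contains a nonzero interval on which $T$ is unbounded. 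That contradicts the eventual bound $w$, forces $v=0$, and the proof concludes by Step~3 as before.
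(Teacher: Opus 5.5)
Your reduction is fine. $s=\sup\mathcal A_e$ is a fragment of $e$ and $v=e-s=\inf_u b_u$. If $v>0$, then $v$ has no atomic fragment and $T([-c,c])$ is unbounded for every $0<c\sqsubseteq v$. This is the paper's claim $\inf_u b_u=0$ in slightly different form.

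The gap is the step that carries the whole proof: deriving a contradiction from that assumption, i.e.\ Case~2 of the local boundedness argument. Your finite-subset net is eventually $0$, since any finite $S$ containing both halves of a splitting gives $v-\bigvee S=0$, so its eventual bound is vacuous. Restricting $S$ to fragments of $a$ or of $b$ breaks one of two things. Either directedness fails, because $\{a\}$ and $\{b\}$ have no common upper bound, or convergence to $0$ fails, because $v-\bigvee S\ge b$.

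The product-net proposal does not repair this. If the value of the net is controlled by the finite-set coordinate, it is again eventually $0$. If the value is the DC witness $x_n$, you need $c_n\downarrow 0$, which fails in general: a strictly decreasing sequence of fragments can have $\inf_n c_n>0$. Moreover, one witness per $n$ with $|T(x_n)|\not\le n\,w_n$ (and $w_n$ is never specified) cannot defeat an eventual bound by an arbitrary $w\in Y_+$, because $(n w_n)$ need not be cofinal in $Y_+$. That is precisely why Lemma~\ref{lem:shrinking-intervals} indexes by all pairs $(\alpha,x)$ with $|x|\le x_\alpha$ rather than by selected witnesses.

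The missing idea is to subtract the infimum, as the paper does. By DC and the absence of weak atoms (in a Dedekind complete space weak atoms are atoms), choose nonzero fragments $c_1=v$ with $c_{n+1}\sqsubseteq c_n$ and $c_{n+1}\neq c_n$. Let $c=\inf_n c_n$; it exists by Dedekind completeness and satisfies $c\sqsubseteq c_n$ for all $n$. Put $y_n=c_n-c$. Then $0<y_n\sqsubseteq v$, since $c\le c_{n+1}<c_n$, and $y_n\downarrow 0$. Lemma~\ref{lem:shrinking-intervals} applied directly to $(y_n)$ then gives $n_0$ with $T([-y_{n_0},y_{n_0}])$ order bounded, contradicting your standing assumption. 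No witnesses or finite-subset nets are needed. With this step inserted, your argument closes and is essentially the paper's proof.
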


\begin{proof}
Fix $0<e\in X_+$. We first show that there exists
a nonzero fragment $u\sqsubseteq e$ such that
$T([-u,u])$ is order bounded.

\medskip
\noindent\textbf{Case 1.}
Suppose that $e$ has a nonzero atomic fragment $a$.
By Case~1 of the proof of
Lemma~\ref{lem:local-bounded-fragment-cofinal},
$T([-a,a])$ is order bounded.

\medskip
\noindent\textbf{Case 2.}
Suppose that $e$ has no nonzero atomic fragment.
Since $X$ is Dedekind complete, every band is a
projection band. In particular, every weak atom
is an atom. Thus no nonzero fragment of $e$
is a weak atom.

Set $x_1=e$. Since $x_1$ is not a weak atom,
there exists a nonzero proper fragment
$x_2\sqsubseteq x_1$. Similarly, since $x_2$
is a nonzero fragment of $e$ and hence is not
a weak atom, we may choose a nonzero proper
fragment $x_3\sqsubseteq x_2$.
Continuing inductively, using DC, we obtain
a sequence $(x_n)$ such that
\[
0<x_{n+1}\sqsubseteq x_n,
\qquad x_{n+1}\ne x_n
\quad(n\ge1).
\]

By Dedekind completeness, the infimum
$x=\inf_{n\ge1}x_n$ exists. Since
$\mathfrak F_e$ is closed under existing
infima, $x\sqsubseteq e$ and
$x\sqsubseteq x_n$ for every $n$.

Put $y_n=x_n-x$. Then
\[
0<y_n\sqsubseteq e,
\qquad y_n\downarrow0.
\]
Indeed, $y_n=0$ would imply
$x_n=x\le x_{n+1}<x_n$, a contradiction.

By Lemma~\ref{lem:shrinking-intervals},
there exists $n_0$ such that
$T([-y_{n_0},y_{n_0}])$ is order bounded.

Thus, in either case, every nonzero positive
element of $X$ has a nonzero fragment on whose
symmetric order interval $T$ is order bounded.

\medskip
Now define
\[
\mathcal A_e
:=
\{u\in\mathfrak F_e:
T([-u,u])\text{ is order bounded}\}.
\]
By Step~1 of the proof of
Theorem~\ref{thm:order-continuous-oao-cofinal},
$\mathcal A_e$ is directed.

For each $u\in\mathcal A_e$, put $b_u=e-u$.
Since $\mathcal A_e$ is directed, the net
$(b_u)_{u\in\mathcal A_e}$ is decreasing.
By Dedekind completeness, its infimum
\[
b:=\inf_{u\in\mathcal A_e}b_u
\]
exists.

We claim that $b=0$. Suppose otherwise.
Since every $b_u$ is a fragment of $e$,
so is $b$. By the local boundedness property
established above, there exists a nonzero
fragment $v\sqsubseteq b$ such that
$T([-v,v])$ is order bounded. Hence
$v\in\mathcal A_e$.

For every $u\in\mathcal A_e$, we have
$v\le b\le e-u$, so $u\perp v$.
Hence $u+v=u\vee v\sqsubseteq e$.
By the Riesz decomposition theorem and the
orthogonal additivity of $T$,
$T([-u-v,u+v])$ is order bounded.
Thus $u+v\in\mathcal A_e$.

Since $b$ is a lower bound of $(b_u)$,
\[
b\le e-(u+v)=b_u-v
\]
for every $u\in\mathcal A_e$.
Taking the infimum over $u$ yields
$b\le b-v$, contradicting $v>0$.
Therefore
\[
b_u=e-u\downarrow0.
\]

By Lemma~\ref{lem:shrinking-intervals},
there exists $u_0\in\mathcal A_e$ such that
$T([-b_{u_0},b_{u_0}])$ is order bounded.

Since
\[
e=u_0\sqcup b_{u_0},
\]
as in Step~3 of the proof of
Theorem~\ref{thm:order-continuous-oao-cofinal},
the Riesz decomposition theorem and the
orthogonal additivity of $T$ imply that
$T([-e,e])$ is order bounded.

Since $e\in X_+$ was arbitrary, $T$ is
order bounded.
\end{proof}

Fotiy, Kadets and Popov characterized the Riesz spaces $X$ and $Y$
for which every orthogonally additive operator from $X$ to $Y$ is
laterally-to-order bounded; see
\cite[Theorem~2.1]{FotiyKadetsPopov23}. It is therefore natural to
ask when every orthogonally additive operator is order bounded.
For Riesz spaces with a cofinal family of projection bands, the
following result gives a complete answer.

\begin{theorem}
Let $X$ and $Y$ be Riesz spaces such that $X$ has a cofinal family
of projection bands and $Y$ is Archimedean. Then
\[
O(X,Y)=U(X,Y)
\]
if and only if $X=\{0\}$ or $Y=\{0\}$.
\end{theorem}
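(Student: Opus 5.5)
The easy direction is immediate. If $X=\{0\}$ or $Y=\{0\}$, every map $X\to Y$ is the zero map, which is trivially an order bounded orthogonally additive operator. So $O(X,Y)=U(X,Y)$ holds in that case. For the converse we argue by contraposition: assuming $X\neq\{0\}$ and $Y\neq\{0\}$, we will construct an orthogonally additive operator $T:X\to Y$ that is not order bounded.

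First we find a nonzero projection band to work in. Take any $0<e\in X_+$. Applying the cofinality hypothesis to the band $B_e$, we get a nonzero projection band $D\subseteq B_e$. As in the proof of Lemma~\ref{lem:decreasing-fragments}, $d:=P_De$ satisfies $0<d\sqsubseteq e$. Fix $0<y\in Y_+$.

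The idea is to make $T$ depend only on the component $P_Dx$, through a nonlinear functional that is unbounded on $[0,d]$ but vanishes at $0$. We will try
\[
T(x):=\varphi(P_Dx)\,y ,
\]
where $\varphi:D\to\mathbb R$ is a function with $\varphi(0)=0$. To make $T$ orthogonally additive, note that if $x_1\perp x_2$ then $P_Dx_1\perp P_Dx_2$. So it suffices that $\varphi$ itself is orthogonally additive on $D$. The simplest orthogonally additive choices, such as $\varphi=\mathbf 1_{\{x\ne0\}}$, fail because orthogonal additivity forces additivity over disjoint pieces. We therefore need a genuinely unbounded orthogonally additive real functional on $[-d,d]$.

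A cleaner route avoids the component structure entirely and uses only the scalar multiples $\lambda d$. Let $E:=\{x\in X:\ x=\lambda d+z,\ \lambda\in\mathbb R,\ z\perp d\}$, and define
\[
T(x):=g(\lambda)\,y \quad\text{if } P_{B}x=\lambda d \text{ for some }\lambda,\qquad T(x):=0\text{ otherwise}.
\]
Here $B$ is the band generated by $d$ and $g:\mathbb R\to\mathbb R$ is unbounded on $[-1,1]$ with $g(0)=0$, for instance $g(\lambda)=1/\lambda$ for $\lambda\ne0$. For orthogonal additivity, take $x_1\perp x_2$. Their $B$-components $P_Bx_1$ and $P_Bx_2$ are disjoint. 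If both are nonzero multiples of $d$, they cannot be disjoint, since $d\not\perp d$. Hence at most one of them is a nonzero multiple of $d$, and the sum $P_B(x_1+x_2)$ is a multiple of $d$ exactly when the nonzero component is. In every case $T(x_1+x_2)=T(x_1)+T(x_2)$. One also has to check the case where $P_B(x_1+x_2)=\lambda d$ while neither summand's component is a multiple of $d$: disjoint $u_1,u_2$ with $u_1+u_2=\lambda d$, $\lambda\neq0$, are fragments of $\lambda d$, so this is exactly the case where $d$ has proper fragments. This is the main obstacle.

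To handle it, we will replace the rigid multiples-of-$d$ construction by an additive "count" over a disjoint system, or simply restrict $g$ so that the construction is consistent. The plan is to choose $d$ to be either an atom or a non-weak-atom and split into cases. If $d$ can be taken to be an atom, then $B=\mathbb Rd$ by the discussion after Definition of atoms, the problematic case cannot occur, and $T(\lambda d)=\lambda^{-1}y$ is unbounded on $[-d,d]$ because $Y$ is Archimedean: the set $\{n y\}$ has no upper bound. If $e$ has no atomic fragments, we instead use Lemma~\ref{lem:decreasing-fragments} to get fragments $e_\gamma\downarrow0$. We then define $T$ by $T(x)=\sum$ of a nonlinear weight over the fragment components, mimicking the $1/\lambda$ blow-up, and again use Archimedeanity of $Y$ to conclude that $T([-e,e])$ is not order bounded.
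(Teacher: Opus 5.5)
Your easy direction and your atom case are correct, and they match the paper's construction. If $a$ is an atom, then $B_a=\mathbb R a$ is a projection band. The map $T(x)=g(\lambda_x)y$ is orthogonally additive, because two disjoint elements cannot both have nonzero components in a one-dimensional band. Finally, $\{ny:n\in\mathbb N\}$ has no upper bound in the Archimedean space $Y$.

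\textbf{The gap.} The case where $e$ has no atomic fragment is not actually treated. The phrase ``$T(x)=\sum$ of a nonlinear weight over the fragment components'' does not define an operator, for three reasons:
\begin{itemize}
\item The net $(e_\gamma)$ of Lemma~\ref{lem:decreasing-fragments} is a chain, not a disjoint system.
\item Its members need not be band projections of $e$, so a general $x\in X$ has no ``components'' along them.
\item Infinite sums have no meaning in $Y$.
\end{itemize}
Neither orthogonal additivity nor unboundedness is verified there. So the converse is unproved whenever some positive element of $X$ dominates no atom.

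\textbf{How the paper handles it.} The paper constructs nothing in that case; it shows the case cannot occur. Since $U(X,Y)\subseteq P(X,Y)\subseteq O(X,Y)$, the equality $O=U$ forces $O=P$. The Fotiy--Kadets--Popov characterization (their Theorem~2.1) then says that $\mathfrak F_e$ is finite for every $e$. A minimal element of the finite poset $\mathfrak F_e\setminus\{0\}$ is a weak atom, hence an atom by Proposition~\ref{lem:weak-atom-cofinal}(ii), and your atom construction finishes the proof.

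\textbf{A direct alternative.} If you prefer to avoid that theorem, you can close the atomless case with a \emph{linear} operator.
\begin{itemize}
\item By Proposition~\ref{lem:weak-atom-cofinal}(ii), no nonzero fragment of $e$ is a weak atom. So set $x_1=e$ and choose inductively $x_{n+1}\in\mathfrak F_{x_n}\setminus\{0,x_n\}$.
\item The elements $x_n-x_{n+1}\in[0,e]$ are nonzero and pairwise disjoint: for $m>n$ one has $0\le x_m-x_{m+1}\le x_{n+1}\perp x_n-x_{n+1}$. Hence they are linearly independent.
\item Extend them to a Hamel basis and take a linear functional $f$ with $f(x_n-x_{n+1})=n$.
\item The map $T(x)=f(x)y$ is linear, hence orthogonally additive, and $T([0,e])\supseteq\{ny:n\in\mathbb N\}$ is not order bounded.
\end{itemize}
Like the paper's route, this alternative uses choice.
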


\begin{proof}
The ``if'' part is immediate. For the converse, suppose that
$O(X,Y)=U(X,Y)$ and assume that $X\neq\{0\}$ and $Y\neq\{0\}$.
Since
\[
U(X,Y)\subseteq P(X,Y)\subseteq O(X,Y),
\]
it follows that $P(X,Y)=O(X,Y)$. Hence, by
\cite[Theorem~2.1]{FotiyKadetsPopov23}, the set $\mathfrak F_e$ is
finite for every $e\in X$.

Choose $0<e\in X$. Since $\mathfrak F_e\setminus\{0\}$ is a nonempty
finite partially ordered set, it has a minimal element, say $a$.
Recall that, for $x,y\in\mathfrak F_e$,
\[
x\leq y \quad\Longleftrightarrow\quad x\sqsubseteq y
\]
(see \cite[Proposition~3.4(2)]{MykhaylyukPlievPopov21}).
We claim that $\mathfrak F_a=\{0,a\}$. Indeed, if
$0\neq b\in\mathfrak F_a$, then $b\sqsubseteq a\sqsubseteq e$, and
hence $b\in\mathfrak F_e$. Moreover, $b\leq a$. By the minimality of
$a$ in $\mathfrak F_e\setminus\{0\}$, we obtain $b=a$. Thus $a$ is a
weak atom. By Proposition~\ref{lem:weak-atom-cofinal}, $a$ is an atom and
the principal band $B_a$ generated by $a$ is a projection band.

Since $a$ is an atom,
\[
B_a=\operatorname{span}\{a\}.
\]
Let $P_a:X\to B_a$ be the band projection onto $B_a$. Thus, for every
$x\in X$, there exists a unique scalar $\lambda_x\in\mathbb R$ such that
\[
P_a x=\lambda_x a.
\]

Choose $0<y\in Y$ and a function $g:\mathbb R\to\mathbb R$ such that
$g(0)=0$ and $g$ is unbounded above on $[0,1]$. Define
\[
T(x)=g(\lambda_x)y,\qquad x\in X.
\]
We show that $T$ is orthogonally additive. Let $x,z\in X$ with
$x\perp z$. Since the band projection $P_a$ preserves disjointness,
$P_a x\perp P_a z$. Both $P_a x$ and $P_a z$ belong to the
one-dimensional band $B_a$, so at least one of them is zero. Hence
either $\lambda_x=0$ or $\lambda_z=0$. Since $P_a$ is linear,
$\lambda_{x+z}=\lambda_x+\lambda_z$, and therefore, using $g(0)=0$,
\[
T(x+z)
 =g(\lambda_x+\lambda_z)y
 =g(\lambda_x)y+g(\lambda_z)y
 =T(x)+T(z).
\]
Thus $T\in O(X,Y)$.

On the other hand, for every $\lambda\in[0,1]$ we have
$\lambda a\in[0,a]$ and
\[
T(\lambda a)=g(\lambda)y.
\]
Since $g$ is unbounded above on $[0,1]$ and $0<y\in Y$, the
Archimedean property of $Y$ implies that
\[
\{g(\lambda)y:\lambda\in[0,1]\}
\]
is not order bounded. Hence $T([0,a])$ is not order bounded, so
$T\notin U(X,Y)$. This contradicts $O(X,Y)=U(X,Y)$.
Therefore $X=\{0\}$ or $Y=\{0\}$.
\end{proof}

\section*{Declarations}

\noindent\textbf{Conflict of interest.}
The author declares that there is no conflict of interest.

\medskip

\noindent\textbf{Data availability.}
No datasets were generated or analysed during the current study.

\medskip

\noindent\textbf{Funding.}
The author received no funding for this work.

\end{document}